\numberwithin{equation}{section}
\newtheorem{theorem}{Theorem}[section]
\newtheorem{claim}[theorem]{Claim}
\newtheorem{corollary}[theorem]{Corollary}
\newtheorem{case}{Case}
\theoremstyle{definition}
\newtheorem{remark}[theorem]{Remark}
\newtheorem{remarks}[theorem]{Remarks}
\newtheorem{example}[theorem]{Example}
\newtheorem{definition}[theorem]{Definition}
\newtheorem{question}[theorem]{Question}
\newtheorem{conjecture}[theorem]{Conjecture}
\newcommand{\tn}[1]{\textnormal{#1}}
\newcommand{\sqb}[1]{\left[#1\right]}
\newcommand{\wt}[1]{\widetilde{#1}}
\def\Z{\mathbb{Z}}
\def\R{\mathbb{R}}
\def\N{\mathbb{N}}
\def\M{{M/\hspace{-3pt}\sim}}
\def\X{{X/\hspace{-3pt}\sim}}
\def\B{\mathcal{B}}
\def\BB{\B/\hspace{-2pt}\equiv}
\def\Mt{\wt{M}}
\def\Mtq{\Mt/\hspace{-3pt}\approx}
\def\int{\tn{Int}}
\def\s{\sigma}
\def\e{\varepsilon}
\def\A{{A/\hspace{-3pt}\sim}}
\def\ss{\vskip 1.5 pt plus 1pt}
\font\cuf=cmtt8
\newcommand{\curl}[1]{{\cuf #1}}
\def \Item#1!{\noindent\hbox to 18pt{\bf\kern2pt #1 \hss\ignorespaces}}
\begin{document}
\title{Orbit Spaces of Gradient Vector Fields}

\author[J.S.~Calcut]{Jack S. Calcut}
\address{Department of Mathematics\\
         Oberlin College\\
         Oberlin, OH 44074}
\email{jcalcut@oberlin.edu}
\urladdr{\href{http://www.oberlin.edu/faculty/jcalcut/}{\curl{http://www.oberlin.edu/faculty/jcalcut/}}}

\author[R.E.~Gompf]{Robert E. Gompf}
\address{Department of Mathematics\\
                    University of Texas at Austin\hfill\break
                    \indent 1 University Station C1200\\
                    Austin, TX 78712-0257}
\email{gompf@math.utexas.edu}
\urladdr{\href{http://www.ma.utexas.edu/users/gompf/}{\curl{http://www.ma.utexas.edu/users/gompf/}}}

\keywords{Orbit space, gradient vector field, locally contractible, weak homotopy equivalence, homotopy groups, path lifting, non-Hausdorff.}
\subjclass[2000]{Primary 37D15, 54B15; Secondary 55Q52, 37C10, 37C15}
\thanks{The second author was partially supported by NSF grant
DMS-1005304.}
\date{November 6, 2011}

\begin{abstract}
We study orbit spaces of generalized gradient vector fields for Morse functions.
Typically, these orbit spaces are non-Hausdorff.
Nevertheless, they are quite structured topologically and are amenable to study. We show that these orbit spaces are locally contractible.
We also show that the quotient map associated to each such orbit space is a weak homotopy equivalence and has the path lifting property.
\end{abstract}

\maketitle

\section{Introduction}\label{introduction}

Our interest in the topology of orbit spaces of smooth flows was sparked by the opening question in V.I.~Arnol{\textprime}d's \emph{Problems on singularities and dynamical systems}~\cite{arnold}:
\emph{can one write down explicitly a polynomial or trigonometric polynomial vector field on $\mathbb{R}^5$
whose orbit space is an exotic $\mathbb{R}^4$?} As noted by Arnol{\textprime}d, every exotic $\mathbb{R}^4$ appears in this
way for some smooth vector field on $\mathbb{R}^5$.
A curious corollary of Arnol{\textprime}d's observation is that there exists an uncountable collection of smooth vector fields on $\R^5$ each pair of which are topologically equivalent but not smoothly equivalent.
Motivated by Arnol{\textprime}d's problem, one is naturally led to consider the topology of orbit spaces for general smooth vector fields and 
potential implications for dynamics.
\ss

In~\cite{cgm}, the authors and J.~McCarthy gave an obstruction to manifolds appearing as orbit spaces of flows.
Namely, if the orbit space is merely semilocally simply-connected, then the induced homomorphism of fundamental groups is surjective.
In a positive direction, we gave a quadratic polynomial vector field on $\mathbb{R}^{2n-1}$ whose orbit space is
diffeomorphic to $\mathbb{C}P^{n-1}$ (naturally, this vector field comes from the complex Hopf fibration).
In a subsequent paper~\cite{cg}, the authors will give a general construction for producing infinitely many positive examples
of smooth manifolds appearing as orbit spaces of smooth vector fields on $\mathbb{R}^n$. For instance, smooth, noncompact, simply-connected $4$-manifolds realizing all possible intersection forms arise as orbit spaces of $\R^5$.\ss

Orbit spaces of general flows often have undesirable topological properties (e.g., non-Hausdorff, non-metrizable).
Nevertheless, they are ubiquitous in dynamics and are significant since their topological types are invariants of their associated flows up to topological equivalence.
The theory of $\R$-actions obviously differs markedly from that of actions of compact Lie groups, the latter having, for instance, closed and proper orbit space quotient maps~\cite[p.~38]{bredon}.
We find orbit spaces of $\R$-actions to be important and interesting in their own right, with their own problems, and aim to show that they are amenable to topological study. The present paper arose from our desire to show that a class of dynamically important flows (i.e., gradient flows for Morse functions) have semilocally simply-connected orbit spaces. For these orbit spaces, we obtain much more.\ss

Herein, we work with the class of $f$-gradient vector fields (defined in Section~\ref{definitions}) where $f$ is a Morse function.
These vector fields were introduced by A.V.~Pajitnov and include as proper subsets both gradient-like and Riemannian gradient vector fields for $f$~\cite[pp.~52--53]{pajitnov}.
In Theorem~\ref{loc_contr}, we show that orbit spaces of \hbox{$f$-gradients} are locally contractible
(see Theorem~\ref{loc_contr} for a more detailed statement).\ss

Let $\M$ be the orbit space of an $f$-gradient and let $q:M\to\M$ be the associated quotient map. Combining Theorem~\ref{loc_contr} with a theorem of McCord~\cite[Thm.~6]{mccord} (which extends work of Dold and Thom~\cite{doldthom}), we obtain in Corollary~\ref{weak_equiv} that $q$ is a weak homotopy equivalence (i.e., $q$ induces isomorphisms on all corresponding homotopy groups, and hence also on all corresponding singular (co)homology groups). As fibers of $q$ are contractible, this implies that $q$ is a \emph{quasifibration} (see Remarks~\ref{weak_equiv_remarks}). This raises the question of whether $q$ is a \emph{Serre fibration} (i.e., has the homotopy lifting property for disks $D^k$). In a positive direction, we prove in Theorem~\ref{pathlifting} that $q$ has the path lifting property for any specified lifts of both endpoints.\ss

For a compact Lie group $G$ acting topologically on a Hausdorff space $X$, Montgomery and Yang showed that the associated orbit space quotient map \hbox{$\pi:X\to\X$} has the path lifting property~\cite[Cor.~1]{montgomery_yang} (see also~\cite[Thm.~II.6.2]{bredon}). Furthermore, in case $X$ is also path-connected and locally simply-connected, and $G$ is connected, they showed that $\pi$ induces a surjection of fundamental groups and $\X$ is locally simply-connected~\cite[Cor.~2]{montgomery_yang} (see also~\cite[pp.~91--92]{bredon}).\ss

Orbit spaces of certain nice actions of non-compact groups on manifolds have been studied in, e.g.,~\cite{goel_neumann,mccann,nakayama,rainer,palais}.
The gradient flows studied in the present paper are not covered by any of these results.
For example, when $f$ has nonempty critical set, the flow associated to an $f$-gradient is not proper and the orbit space $\M$ is non-Hausdorff.\ss

Recall that Conner~\cite{conner} conjectured, and Oliver~\cite{oliver} later proved, that the orbit space of any continuous action of a compact Lie group on $\R^n$ is contractible. Clearly, Conner's conjecture cannot extend to general $\R$-actions by the Hopf examples mentioned above.
The results of the present paper raise the question of whether Conner's conjecture holds for orbit spaces of $f$-gradients on $\R^n$.\ss

It has been suggested to us by D.~Sullivan and A.~Verjovsky (independently) that manifolds of dimension 3 and 4 might be studied by looking at their $f$-gradient orbit spaces, which have dimension 2 and 3 respectively. To our knowledge, this interesting idea remains unexplored.\ss

This paper is organized as follows. Section~\ref{definitions} describes our conventions. Section~\ref{main_results} proves our main results. Section~\ref{examples} closes with a few pertinent examples and questions for further study.

\section{Definitions and Preliminaries}\label{definitions}

Throughout this paper, $M$ is a smooth (= $C^{\infty}$), connected, possibly noncompact $m$-manifold ($m\geq1$) without boundary, and $f:M\to\R$ is an arbitrary but fixed (smooth) Morse function. Every such $M$ admits a Morse function $f$~\cite[\S6]{milnor63}. Let $S(f)$ denote the set of critical points of $f$, which is a discrete subset of $M$. By our convention, manifolds are Hausdorff with a countable basis, and a \emph{map} is a continuous function.\ss

\begin{definition}[Pajitnov~\cite{pajitnov03}]\label{fgrad_def}
Let $v:M\to TM$ be a smooth vector field. Recall that $v(f):M\to \R$ is the smooth function defined by $v(f)(p):=df_{p}(v(p))$. Then, $v$ is an \hbox{\emph{$f$-gradient}} provided the following two conditions are satisfied:
\begin{enumerate}\setcounter{enumi}{\value{equation}}
\item\label{f_incr_orbits} $v(f)>0$ on $M-S(f)$.
\item\label{nondegen} Each $p\in S(f)$ is a non-degenerate minimum of $v(f)$.\setcounter{equation}{\value{enumi}}
\end{enumerate}
\end{definition}

\begin{remark}\label{clarify}
If $p\in S(f)$, then $df_{p}=0$ and $v(f)(p)=0$. (If $v$ is an \hbox{$f$-gradient} and $p\in S(f)$, then $v(p)=0$ as well~\cite[p.~50]{pajitnov}.) So, condition~\ref{f_incr_orbits} implies that $v(f):M\to[0,\infty)$ and each $p\in S(f)$ is a local minimum and a critical point of $v(f)$. Therefore, once~\ref{f_incr_orbits} is verified, the crux of~\ref{nondegen} is non-degeneracy.
\end{remark}

\begin{remark}
Utilizing the Hartman-Grobman theorem, Pajitnov proved that in a neighborhood of each \hbox{$p\in S(f)$} the flow generated by an $f$-gradient is topologically (but not necessarily diffeomorphically) flow equivalent to a standard hyperbolic flow \hbox{\cite[pp.~76--80]{pajitnov}} (see also Figure~\ref{morse_field} below).
\end{remark}

\begin{remark}
Every pair $(M,f)$ admits \hbox{$f$-gradients}, even when $M$ is noncompact. For instance, it is not difficult to extend the argument in~\cite[pp.~20--21]{milnor65} to noncompact $M$ to show that each pair $(M,f)$ admits a gradient-like vector field.
\end{remark}

Let $v:M\to TM$ be an arbitrary smooth vector field. If $p\in M$, then we let $\sqb{p}$ denote the image in $M$ of the maximal integral curve of $v$ through $p$; we call $\sqb{p}$ an \emph{orbit} of $v$.
Let $\sim$ denote the equivalence relation on $M$ whose classes are the orbits of $v$.
Let $\M$ denote the \emph{orbit space} of $v$ and let:
\[
	q:M\to\M
\]
denote the associated quotient map (by definition, $U\subset\M$ is open if and only if $q^{-1}(U)\subset M$ is open).
We say $v$ is \emph{complete} when the unique maximal flow generated by $v$ is defined for all $p\in M$ and $t\in\R$; this flow, denoted
\hbox{$\Phi:\R\times M\to M$}, is then a $1$-parameter group of diffeomorphisms of $M$ (see \hbox{\cite[\S2]{milnor63}} or \cite[\S6.2]{hirsch}).
When $M$ is compact, hence a closed manifold, every $v$ is complete, but this is not the case for noncompact $M$.
For our purposes, it will be convenient to have a complete vector field. This dilemma is easily overcome by scaling a given $f$-gradient by a suitable smooth function $M\to\R^{+}$ as we now explain.

\begin{claim}\label{scale_f-gradient}
Let $v$ be an $f$-gradient and let $\lambda:M\to\R^{+}$ be any smooth, positive function.
Then, the product vector field $\lambda v:M\to TM$ is an $f$-gradient.
Furthermore, $\sim$, $\M$, and $q$ are all unchanged upon replacing $v$ with $\lambda v$.
Lastly, there exists $\lambda:M\to\R^{+}$ so that the vector field $\lambda v$ is complete.
\end{claim}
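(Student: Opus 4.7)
The plan is to verify the three assertions in sequence, handling the easy algebraic parts first and saving the completeness construction for last.

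For the first assertion, I would just compute. Since $(\lambda v)(f)(p) = df_p(\lambda(p) v(p)) = \lambda(p) \cdot v(f)(p)$ and $\lambda > 0$, condition~\ref{f_incr_orbits} for $\lambda v$ is immediate from condition~\ref{f_incr_orbits} for $v$. For condition~\ref{nondegen}, pick $p \in S(f)$; since $v(f)(p) = 0$ and $p$ is a critical point of $v(f)$ (by Remark~\ref{clarify}), the product rule gives that the Hessian of $\lambda v(f)$ at $p$ equals $\lambda(p)$ times the Hessian of $v(f)$ at $p$. A positive multiple of a positive-definite symmetric matrix is positive definite, so $p$ is still a non-degenerate minimum of $(\lambda v)(f)$.

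For the second assertion, I would show that $v$ and $\lambda v$ have the same orbits as point sets. If $p \in S(f)$, both vector fields vanish at $p$ and $[p] = \{p\}$ in either case. If $p \notin S(f)$, let $\gamma:(a,b) \to M$ be the maximal integral curve of $v$ through $p$, and define $\tilde\gamma(s) = \gamma(\phi(s))$ where $\phi$ solves the ODE $\phi'(s) = \lambda(\gamma(\phi(s)))$, $\phi(0) = 0$. A direct computation yields $\tilde\gamma'(s) = \lambda(\tilde\gamma(s))\, v(\tilde\gamma(s))$, so $\tilde\gamma$ is an integral curve of $\lambda v$. Because $\lambda > 0$, $\phi$ is strictly increasing, and a standard maximal-extension argument shows the image of $\tilde\gamma$ fills out the image of $\gamma$. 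A symmetric argument (rescaling $\lambda v$ by $1/\lambda$) gives the reverse inclusion. Thus $\sim$ coincides for $v$ and $\lambda v$, and consequently so do $\M$ and $q$.

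The third assertion is the substantive step. I would invoke the theorem of Nomizu-Ozeki that every smooth manifold admits a complete Riemannian metric $g$. Set
\[
\lambda(p) := \frac{1}{1 + \lVert v(p) \rVert_g},
\]
which is smooth and strictly positive. Then $\lVert \lambda(p) v(p)\rVert_g < 1$ for every $p \in M$. For any integral curve $\tilde\gamma$ of $\lambda v$ defined on a bounded interval $(a,b)$, the arc length between any two parameter values is bounded by the length of the parameter interval, so $\tilde\gamma$ is uniformly continuous in the complete metric $d_g$; consequently $\lim_{t \to b^-} \tilde\gamma(t)$ exists in $M$, and similarly at $a$. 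The standard escape lemma then contradicts maximality unless the domain is all of $\R$. Hence $\lambda v$ is complete.

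The main obstacle is the last step, and the only nontrivial input it uses is the existence of a complete Riemannian metric; everything else follows from the bounded-speed/escape-lemma package.
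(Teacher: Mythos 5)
Your argument follows the same overall structure as the paper: verify the two conditions of Definition~\ref{fgrad_def} by the Hessian computation $\lambda(p)H$, observe that $\lambda v$ and $v$ have the same orbits via reparameterization, and make $\lambda v$ complete by invoking Nomizu--Ozeki and bounding $\|\lambda v\|_g$. Your Hessian argument via positive-definiteness and the explicit ODE reparameterization $\phi'(s)=\lambda(\gamma(\phi(s)))$ are both fine and, if anything, spelled out more carefully than in the paper.

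There is, however, one genuine (though easily repaired) flaw in the last step: the function $\lambda(p)=1/(1+\|v(p)\|_g)$ is continuous but not smooth at zeros of $v$, because $\|v(p)\|_g=\sqrt{g_p(v(p),v(p))}$ involves a square root of a smooth nonnegative function that vanishes at each $p\in S(f)$ (and $S(f)$ is typically nonempty). Since the Claim requires $\lambda$ to be smooth, this $\lambda$ does not qualify as written. The fix is immediate: take $\lambda(p)=1/\bigl(1+\|v(p)\|_g^2\bigr)$, which is smooth because $\|v\|_g^2=g(v,v)$ is smooth, and which still yields $\|\lambda(p)v(p)\|_g=\|v(p)\|_g/(1+\|v(p)\|_g^2)\le 1/2$, so your bounded-speed/escape-lemma argument goes through unchanged. (The paper sidesteps this by constructing $\lambda$ via a partition of unity rather than an explicit formula; your explicit formula is cleaner once the square is inserted.)
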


\begin{proof}
First, we show $\lambda v$ is an $f$-gradient.
Clearly condition~\ref{f_incr_orbits} holds for $\lambda v$ since $(\lambda v)(f)=\lambda v(f)$.
To verify~\ref{nondegen} for $\lambda v$, let $p\in S(f)$. By Remark~\ref{clarify}, $p$ is a critical point of $(\lambda v)(f)$, which we must show is non-degenerate. By~\cite[\S6.1]{hirsch} or~\cite[\S2]{milnor63}, we may work locally and assume $M$ is an open subset of $\R^m$. Then, let $H$ denote the Hessian matrix of $v(f)$ at $p$, which is nonsingular since $v$ is an $f$-gradient. Straightforward computation shows that the Hessian matrix of $(\lambda v)(f)$ at $p$ equals $\lambda(p)H$, which is evidently nonsingular. So, $\lambda v$ is an $f$-gradient.\ss

Next, existence and uniqueness of solutions to first-order ordinary differential equations show that the maximal integral curves of $\lambda v$ are reparameterizations of those of $v$. Hence, $\sim$, and therefore $\M$ and $q$, are identical for $v$ and $\lambda v$.\ss

Finally, let $g$ be a complete Riemannian metric on $M$~\cite{nomizu_ozeki}. 
Using a smooth partition of unity, construct $\lambda:M\to\R^{+}$ so that $\lambda v$ has bounded norm on $M$. The vector field $\lambda v$ is complete~\cite[\S6.2]{hirsch}.
\end{proof}

Throughout Section~\ref{main_results},
$v$ denotes a fixed $f$-gradient which is complete.
By Claim~\ref{scale_f-gradient}, the results of Section~\ref{main_results} all hold true for arbitrary $f$-gradients.
The $q$-saturation of any set $U\subset M$ is $q^{-1}(q(U))=\cup_{t\in\R}\Phi_{t}(U)$. Thus, $q$ is an open map and $\M$ is path-connected, locally path-connected, and has a countable basis.
For clarity, we state three facts concerning topological separation properties of $\M$:
\begin{enumerate}\setcounter{enumi}{\value{equation}}
\item\label{T_0} $\M$ is $T_0$ (i.e., Kolmogorov).
\item\label{T_1} $\M$ is $T_1$ if and only if $S(f)=\emptyset$.
\item\label{T_2} $S(f)\neq\emptyset$ implies $\M$ is not $T_2$ (i.e., Hausdorff), although $\M$ may fail to be $T_2$ even when $S(f)=\emptyset$ (see Example~\ref{line_2_origins} in \S~\ref{examples}).\setcounter{equation}{\value{enumi}}
\end{enumerate}
These facts are easy consequences of the proof of Theorem~\ref{loc_contr} in the next section.

\section{Main Results}\label{main_results}

\begin{theorem}\label{loc_contr}
The orbit space $\M$ is locally contractible.
In fact, each point \hbox{$\sqb{p}\in\M$} has a countable neighborhood basis $\mathcal{N}\sqb{p}$ of open subsets of $\M$ such that if $Z\in\mathcal{N}\sqb{p}$, then $Z$ strong deformation retracts to $\sqb{p}$ and $q^{-1}(Z)$ is homeomorphic to $\R^m$.
\end{theorem}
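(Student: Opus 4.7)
The plan is to split the argument based on whether $p$ is a regular or a critical point of $f$. In each case I will build an explicit countable basis $\mathcal{N}\sqb{p}$ of neighborhoods whose preimages in $M$ are homeomorphic to $\R^m$ and which strongly deformation retract onto $\sqb{p}$. Throughout, by Claim~\ref{scale_f-gradient}, we may assume $v$ is complete.

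For a regular point ($p\notin S(f)$), condition~\ref{f_incr_orbits} gives $v(f)(p)>0$, so the level set $L:=f^{-1}(f(p))$ is a smooth local hypersurface through $p$ transverse to $v$. Choose a nested sequence of open $(m-1)$-disks $T_n\subset L$ centered at $p$ with $\bigcap_n T_n=\{p\}$, and set $Z_n:=q(T_n)$. Because $f$ strictly increases along orbits, no orbit meets $L$ twice, so the flow map $(x,t)\mapsto\Phi_t(x)$ is a smooth injection $T_n\times\R\to M$. Transversality plus the inverse function theorem promote it to a diffeomorphism onto its open image, which coincides with $q^{-1}(Z_n)$, giving $q^{-1}(Z_n)\cong T_n\times\R\cong\R^m$. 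Quotienting the product by the $\R$-action identifies $Z_n$ with $T_n\cong\R^{m-1}$, which is contractible and strongly deformation retracts to $\sqb{p}$.

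For a critical point ($p\in S(f)$), I would invoke Pajitnov's topological flow equivalence (cited after Definition~\ref{fgrad_def}): there is a homeomorphism $\phi\colon N\to U'$ from a neighborhood of $p$ in $M$ onto a neighborhood of the origin in $\R^{a}\oplus\R^{b}$ (with $a+b=m$) intertwining $v$ with the standard hyperbolic flow $\Phi^{0}_{t}(x,y)=(e^{-t}x,e^{t}y)$. For small $\e>0$, set $B_\e:=\{|x|<\e,|y|<\e\}\subset U'$, $V_\e:=\phi^{-1}(B_\e)$, and $Z_\e:=q(V_\e)$. A short calculation shows that the $\Phi^{0}$-saturation of $B_\e$ is the open star-shaped set $B'_\e:=\{(x,y):|x||y|<\e^2\}$, which is homeomorphic to $\R^m$. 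The heart of the argument is identifying $q^{-1}(Z_\e)$ with $B'_\e$ via an extension of $\phi$ along the flow: the assignment $\tilde\phi(\Phi_t(x)):=\Phi^{0}_{t}(\phi(x))$, for $x\in V_\e$ and $t\in\R$, should yield a homeomorphism. The strong deformation retraction of $Z_\e$ onto $\sqb{p}$ is then induced by the radial homotopy $((x,y),s)\mapsto((1-s)x,(1-s)y)$ on $B'_\e$, which stays inside $B'_\e$ (since $(1-s)^{2}|x||y|<\e^2$) and commutes with $\Phi^{0}$, hence descends to the orbit quotient. Letting $\e_n\to 0$ yields the required countable basis: any open $U\ni\sqb{p}$ has saturated preimage containing $V_{\e_n}$ for $n$ large, whence $q^{-1}(Z_{\e_n})\subset q^{-1}(U)$ by saturation.

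The main obstacle is making rigorous the extension $\tilde\phi$ in the critical case. Since $q^{-1}(Z_\e)$ may extend far beyond the domain $N$ of $\phi$, one needs orbits of $v$ passing through $V_\e$ to behave globally as their model counterparts behave in $\R^m$ — in particular, a careful no-reentry argument is required to ensure that such orbits cannot leave $V_\e$ and return. Strict monotonicity of $f$, combined with a judicious shape for $V_\e$ (for instance one contained in a preimage of a short $f$-interval) and boundedness of $f$ on $\overline{V_\e}$, should suffice. A further delicacy is continuity of $\tilde\phi$ at the fixed orbit $\{p\}$, where the flow speed degenerates.
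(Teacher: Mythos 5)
Your proposal follows the paper's proof closely in both cases: for regular $p$, the transversal-plus-flow-map construction producing a product chart is exactly the paper's map $H$ in~\eqref{H_diffeo}; for critical $p$, you invoke the same Pajitnov flow equivalence, identify the $q$-saturation of a small bidisk with the flow-saturated star-shaped set $\{|x||y|<\e^2\}\cong\R^m$, and retract radially. The ``main obstacle'' you flag is indeed the crux, and it is precisely the paper's Claim~\ref{equiv_claim}: one must shrink the bidisk so that no $v$-orbit exits and later re-enters it. Your monotonicity heuristic is the right one, and the paper implements it by choosing $r$ small enough that $f\circ\psi>f(p)$ on the ``top'' $D^k(\rho_1)\times\partial D^{m-k}(R_0)$ and $f\circ\psi<f(p)$ on the ``side'' $\partial D^{k}(R_0)\times D^{m-k}(\rho_2)$; an orbit that left and returned would have to cross the first wall before the second, forcing $f$ to decrease along the orbit and violating~\ref{f_incr_orbits}. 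The continuity concern you raise at the fixed orbit is a non-issue: the extended map $\Psi$ of~\eqref{sat_homeo} agrees with $\psi$ on $\B$, which is already a neighborhood of $p$, and is a local homeomorphism elsewhere by flow-equivariance. Two cosmetic differences: the paper performs the radial deformation retract on the bidisk $\B$ rather than on its flow-saturation $B'_\e$ (the two have canonically homeomorphic quotients, so either works), and in the regular case the paper uses Sard's theorem to replace $p$ by a nearby point on the same orbit with $f(p)$ a regular value, which automatically keeps $\tn{Im}(h)$ clear of $S(f)$; in your version you should note that $T_n$ must be chosen to avoid the discrete set $S(f)$ so that the flow map is injective.
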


\begin{proof}
There are two cases to consider, namely $p\in M-S(f)$ and $p\in S(f)$. First, let $p\in M-S(f)$. We will show that a neighborhood of $\sqb{p}$ in $\M$ is homeomorphic to $\R^{m-1}$. Condition~\ref{f_incr_orbits} implies that $f$ is strictly increasing along $\sqb{p}$. (In particular, there are no nonconstant periodic orbits.) By Sard's theorem, there are regular values of $f$ arbitrarily close to $f(p)$. So, replacing $p$ with a nearby point in $\sqb{p}$ if necessary, we may assume that $f(p)$ is a regular value of $f$. Thus, $f^{-1}(f(p))$ is a smooth submanifold of $M$ containing $p$ and transverse to $v$. Parameterize a neighborhood of $p$ in $f^{-1}(f(p))$ by a smooth embedding:
\[
	h:\R^{m-1}\hookrightarrow f^{-1}(f(p))\subset M
\]
such that $h(0)=p$. Define the local diffeomorphism:
\begin{equation}\begin{split}\label{H_diffeo}
\xymatrix@R=0pt{
      \R\times\R^{m-1}         \ar[r]^-{H} &   M   \\
    (t,x)   \ar@{|-{>}}[r]          &   \Phi_t(h(x))}
\end{split}
\end{equation}
Suppose $H(s,x)=H(t,y)$. Then $\Phi_{s-t}(h(x)))=\Phi_0(h(y))$ and the three properties: (i) $\Phi_0=\tn{id}_{M}$, (ii) $f$ is constant on $\tn{Im}(h)$, and (iii) $f$ is strictly increasing along each nonconstant orbit, imply that $s-t=0$. As $h$ is bijective, we get $x=y$ and $H$ is injective. Hence, $H$ is a diffeomorphism onto its image denoted $U(p)\subset M$. Note that $U(p)$ is an open, \hbox{$q$-saturated} neighborhood of $\sqb{p}$ in $M$, $q\left(U(p)\right)$ is an open neighborhood of $\sqb{p}$ in $\M$, and \hbox{$q|:U(p)\to q\left(U(p)\right)$} is an open map. Consider the following diagram where $\tn{pr}_2(t,x)=x$ is projection:
\begin{equation}\begin{split}\label{H_diagram}
\xymatrix{
        \R\times\R^{m-1}  \ar[r]^-{H} \ar[d]_{\tn{pr}_2}    &   U(p)   \ar[d]^{q|}\\
        \R^{m-1} \ar@{-->}[r]^-{\eta}   &  q\left(U(p)\right)}
\end{split}\end{equation}
The unique function $\eta$ that makes the diagram commute is a bijection. Two applications of the universal property of quotient maps show that $\eta$ is a homeomorphism. The required countable neighborhood basis of $\sqb{p}$ is now obtained by restricting diagram~\eqref{H_diagram} to open disks in $\mathbb{R}^{m-1}$ of radii $1/n$ for $n\in\mathbb{N}$.
This completes the first case.\ss

Second, let $p\in S(f)$. Then, $v(p)=0$ and $\sqb{p}=\{p\}$.
Let $k\in\{0,1,\ldots,m\}$ denote the index of $f$ at $p$.
We introduce some notation.
Let $\omega$ denote the vector field on $\R^m$ defined by:
\[
\omega(x)=(-x_1,\ldots,-x_k,x_{k+1},\ldots,x_m)
\]
which generates the standard hyperbolic flow:
\begin{equation}\label{omega_flow}
	\Omega_{t}(x)=\left(x_{1}e^{-t},\ldots,x_{k}e^{-t},x_{k+1}e^{t},\ldots,x_{m}e^{t}\right).
\end{equation}
Clearly $\Omega$ is a $1$-parameter group of diffeomorphisms of $\R^m$.
For each $R>0$, define the closed disks of radius $R$:
\begin{align*}
 D^{k}(R)&=\left\{x\in\R^k \mid x_1^2+\cdots+x_k^2\leq R^2\right\}\\
 D^{m-k}(R)&=\left\{x\in\R^{m-k}\mid x_{k+1}^2+\cdots+x_{m}^2\leq R^2\right\}
\end{align*}
and the open bidisk:
\begin{equation}\label{open_bidisk}
	\B(R)=\int D^{k}(R) \times \int D^{m-k}(R)\subset\R^{m}.
\end{equation}
Also, let $\equiv_R$ denote the equivalence relation on $\B(R)$ whose classes are the nonempty intersections of orbits of $\omega$ with $\B(R)$ (see Figure~\ref{morse_field}).
\begin{figure}[h!]
    \centerline{\includegraphics[scale=0.7]{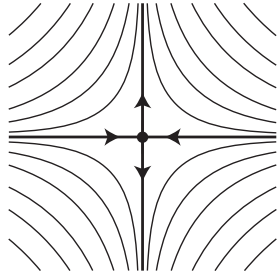}}
    \caption{Open bidisk $\B(R)\subset\R^m$ partitioned by its intersection with orbits of $\omega$ (cases $0<k<m$ pictured).}
    \label{morse_field}
\end{figure}

Given a manifold $N$, a smooth vector field $u:N\to TN$, and $z\in N$, we let $J(z,u)\subset\R$ denote the maximal open interval of definition of the maximal integral curve of $u$ that passes through $z$ at time $t=0$.\ss

As $v$ is an $f$-gradient and $p\in S(f)$, $\Phi$ is topologically flow equivalent to $\Omega$ near $p$~\cite[pp.~76--80]{pajitnov}.
More specifically, there exists an open neighborhood $U$ of $p$ in $M$, an $R>0$, and a homeomorphism $\psi:\mathcal{B}(R)\to U$ such that:
\begin{enumerate}\setcounter{enumi}{\value{equation}}
\item $\psi(0)=p$.
\item\label{int_def} $J(x,\left.\omega\right|{\B(R)})=J(\psi(x),\left.v\right|{U})$ for each $x\in \B(R)$.
\item\label{flow_equiv} $\Phi_{t}\circ\psi(x) = \psi\circ\Omega_{t}(x)$ for each $x\in \B(R)$ and 
$t\in J(x,\left.\omega\right|{\B(R)})$.
\setcounter{equation}{\value{enumi}}
\end{enumerate}

While $\psi$ captures the topological structure of $\Phi$ near $p$, the spaces $\mathcal{B}(R)/\hspace{-2pt}\equiv_R$ and $q(U)$ need \emph{not} be homeomorphic; $q(U)$ may have additional identifications since orbits of $v$ may well leave $U$ and then re-enter $U$ in forward time. This problem will be overcome by reducing $R>0$. Note that if $r\in(0,R]$, then the restriction homeomorphism $\left.\psi\right|:\mathcal{B}(r)\to\psi(\B(r))$ is again a flow equivalence from $\left.\omega\right|$ to $\left. v\right|$.

\begin{claim}\label{equiv_claim}
There is $r\in(0,R)$ so that for all $x,y\in\B(r)$: $x\equiv_r y$ $\Leftrightarrow$ $\psi(x)\sim\psi(y)$.
\end{claim}

\begin{proof}[Proof of Claim~\ref{equiv_claim}]
By~\ref{int_def} and~\ref{flow_equiv}, ``$\Rightarrow$'' holds for all $r\in(0,R]$, as does ``$\Leftarrow$'' in case $k=0$ or $k=m$.
So, assume $0<k<m$ and let $R_{0}\in(0,R)$ be arbitrary.
For $\rho_{1}\in(0,R_0)$, consider the closed bidisk $D^{k}(\rho_1)\times D^{m-k}(R_0)$ (i.e., the tall rectangle in Figure~\ref{lockers}).
\begin{figure}[h!]
    \centerline{\includegraphics[scale=0.7]{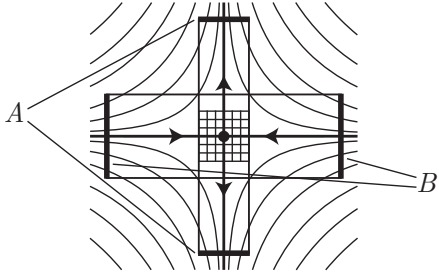}}
    \caption{Closed subsets $A$ and $B$ of $\B(R)$ on which $f\circ\psi>f(p)$ and $f\circ\psi<f(p)$ respectively. The open bidisk $\B(r)$ is hatched.}
    \label{lockers}
\end{figure}
By continuity, there is $\rho_{1}\in(0,R_0)$ small enough so that $f\circ \psi>f(p)$ on the closed set
$A:=D^{k}(\rho_1)\times\partial D^{m-k}(R_0)$ (see Figure~\ref{lockers}, and see \hbox{\cite[pp.~81--83]{pajitnov}} for a proof).
Similarly, there is $\rho_{2}\in(0,R_0)$ so that $f\circ\psi<f(p)$ on
$B:=\partial D^{k}(R_0)\times D^{m-k}(\rho_2)$ (see Figure~\ref{lockers}).
Let $r:=\min\{\rho_1,\rho_2\}$.
Suppose $\psi(x)\sim\psi(y)$ and $x\not\equiv_r y$ for some $x,y\in\B(r)$.
Swapping $x$ and $y$ if necessary, we have $\psi(y)=\Phi_{t}(\psi(x))$ for some $t>0$. Define:
\[
	J:=J(x,\left.\omega\right|{\B(r)})=J(\psi(x),\left.v\right|{\psi(\B(r))}).
\]
Then $t>\sup J$. Further, 
it is clear from Figure~\ref{lockers} and readily verified that there exist $0<t_1<t_2<t$ such that:
\[
	z_1 := \psi^{-1}\circ\Phi_{t_1}(\psi(x))\in A \quad \tn{and} \quad 
	z_2 := \psi^{-1}\circ\Phi_{t_2}(\psi(x))\in B.
\]
So, $f(z_1)>f(p)$ and $f(z_2)<f(p)$. But this contradicts the fact that $f$ is strictly increasing along $\sqb{\psi(x)}$ (see~\ref{f_incr_orbits}) and completes the proof of the claim.
\end{proof}

Let $r(p)$ denote the $r>0$ given by Claim~\ref{equiv_claim}. For each $0<r'\le r(p)$, let $\B(p,r')$ denote the open bidisk $\B(r')\subset\B(r(p))$ (see~\eqref{open_bidisk}) and note that Claim~\ref{equiv_claim} holds true with $r$ replaced by $r'$ (this notation will be reused in the proof of Theorem~\ref{pathlifting} below). For the remainder of the present proof, let $\B$ denote $\B(p,r')$ and let $\equiv$ denote $\equiv_{r'}$ for some arbitrary $0<r'\le r(p)$.
Consider the diagram:
\[
\xymatrix{
        \B  \ar[r]^-{\psi|} \ar[d]_{\pi}    &   \psi(\B)  \ar[d]^{q|}\\
        \BB \ar@{-->}[r]^-{\eta}   &  q(\psi(\B))}
\]
Here, $\pi$ is the quotient map associated to $\equiv$, $\psi|$ is the restriction homeomorphism, and $q|$ denotes the restriction map $\psi(\B)\to q(\psi(\B))$, which is an open map since $q$ is open and $\psi(\B)$ is open in $M$. In particular, $q(\psi(\B))$ is an open neighborhood of $\sqb{p}$ in $\M$. The unique function $\eta$ that makes the diagram commute is clearly surjective, and by Claim~\ref{equiv_claim} it is injective. Bicontinuity of $\eta$ follows by the universal property of quotient maps and since $q|$ is open. Thus, $\eta$ is a homeomorphism.\ss

We now show $\BB$ strong deformation retracts to $\pi(0)$. Clearly $0$ is a strong deformation retract of $\B$ by the homotopy:
\begin{equation}\begin{split}\label{G_homotopy}
\xymatrix@R=0pt{
      \B\times I         \ar[r]^-{G} &   \B   \\
    (x,s)   \ar@{|-{>}}[r]          &   sx}
\end{split}
\end{equation}
By~\eqref{omega_flow}, each $G_s$ sends each class of $\equiv$ into a class of $\equiv$. Consider the diagram:
\[
\xymatrix{
        \B\times I  \ar[r]^-{G} \ar[d]_{\pi\times\tn{id}}    &   \B  \ar[d]^{\pi}\\
        (\BB)\times I \ar@{-->}[r]^-{\mu}   &  \BB}
\]
As $I$ is locally compact and Hausdorff, and $\pi$ is a quotient map, we see that \hbox{$\pi\times\tn{id}$} is a quotient map. The universal property of quotient maps implies that $\mu$ is continuous, which is our desired homotopy.\ss

The last two paragraphs show that $q(\psi(\B))$ is an open neighborhood of $\sqb{p}$ in $\M$ that strong deformation retracts to $\sqb{p}$.
Recall that the $q$-saturation of \hbox{$\psi(\B)\subset M$} is:
\[
	q^{-1}(q(\psi(\B)))=\bigcup_{t\in\R}\Phi_t(\psi(\B)).
\]

\begin{claim}\label{unions_homeo}
The space $\cup_{t\in\R} \Omega_t(\B)$ is diffeomorphic to $\R^m$ and is homeomorphic to the \hbox{$q$-saturation} of  $\psi(\B)$.
\end{claim}
\begin{proof}[Proof of Claim~\ref{unions_homeo}]
The first conclusion is left as an exercise.
Next, define:
\begin{equation}\begin{split}\label{sat_homeo}
\xymatrix@R=0pt{
      \bigcup_{t\in\R} \Omega_t(\B)         \ar[r]^-{\Psi} &   \bigcup_{t\in\R}\Phi_t(\psi(\B))   \\
    	y   \ar@{|-{>}}[r]          &   \Phi_t(\psi(x))}
\end{split}
\end{equation}
where $y=\Omega_{t}(x)$ for some (nonunique) $t\in\R$ and $x\in\B$.
Well-definition and injectivity of $\Psi$ are pleasant exercises using~\ref{flow_equiv} and Claim~\ref{equiv_claim} respectively.
Surjectivity of $\Psi$ is immediate.
Another pleasant exercise shows that:
\begin{enumerate}\setcounter{enumi}{\value{equation}}
\item\label{flow_equiv_2} $\Phi_{s}\circ\Psi(y) = \Psi\circ\Omega_{s}(y)$ for each $y\in\bigcup_{t\in\R} \Omega_t(\B)$ and $s\in\R$.
\setcounter{equation}{\value{enumi}}
\end{enumerate}
Note that $\left.\Psi\right|{\B}=\psi$ and so~\ref{flow_equiv_2} extends~\ref{flow_equiv}.
As $\psi$ is a homeomorphism, property~\ref{flow_equiv_2} implies that $\Psi$ is a local homeomorphism.
As $\Psi$ is a bijection, we get that $\Psi$ is a homeomorphism (in fact, a flow equivalence by~\ref{flow_equiv_2}).
\end{proof}

The required countable neighborhood basis of $\sqb{p}$ is now obtained by taking $\B(p,r(p)/n)$ for $n\in\N$.
This completes the second case and the proof of Theorem~\ref{loc_contr}.
\end{proof}

Recall that a \emph{weak homotopy equivalence} is a map $h:X\to Y$ of topological spaces that induces isomorphisms:
\[
	h_{\sharp} : \pi_{i}(X,x) \to \pi_{i}(Y,h(x))
\]
for all $x\in X$ and all $i\ge 0$ (when $i=0$, isomorphism means bijection). The following theorem of McCord, which extends work of Dold and Thom~\cite[Satz~2.2]{doldthom}, says that weak homotopy equivalence may be checked locally.\ss

\begin{theorem}[McCord~{\cite[Thm.~6]{mccord}}]\label{mccord_thm}
Let $h:X\to Y$ be a map of topological spaces. Suppose $\mathcal{U}$ is an open cover of $Y$ satisfying:
\begin{enumerate}[label=\tn{(\alph*)}]\setcounter{enumi}{0}
\item\label{basis_like} If $U$ and $V$ lie in $\mathcal{U}$ and $y\in U\cap V$, then there exists $W\in\mathcal{U}$ such that $y\in W\subset U\cap V$.
\item\label{local_equiv} If $U\in\mathcal{U}$, then the restriction $\left.h\right|h^{-1}(U):h^{-1}(U)\to U$ is a weak homotopy equivalence.
\end{enumerate}
Then, $h$ itself is a weak homotopy equivalence.
\end{theorem}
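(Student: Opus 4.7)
The plan is to prove that $h_\sharp:\pi_i(X,x_0)\to\pi_i(Y,h(x_0))$ is bijective for every $i\ge 0$ and every basepoint, by reducing to a relative lifting assertion: for each compact polyhedral pair $(P,A)$, each map $f:P\to Y$, and each lift $\widetilde{f}_0:A\to X$ of $f|_A$, one can extend $\widetilde{f}_0$ to $\widetilde{f}:P\to X$ with $h\widetilde{f}\simeq f$ rel $A$. Surjectivity on $\pi_i$ then comes from the case $(P,A)=(D^i,\partial D^i)$ applied to a representative of a given class in $\pi_i(Y)$ whose restriction to $\partial D^i$ is constant at $h(x_0)$; injectivity comes from a similar application to the pair $(S^i\times I,\;S^i\times\partial I\cup\{x_0\}\times I)$ together with a prescribed homotopy in $Y$. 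The $\pi_0$ statement is the zero-dimensional special case.

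To prove the lifting assertion, first triangulate $P$ so that $A$ is a subcomplex. The collection $\{f^{-1}(U):U\in\mathcal{U}\}$ is an open cover of the compact space $P$, so by the Lebesgue number lemma, after a sufficiently fine iterated barycentric subdivision every closed simplex $\sigma$ satisfies $f(\sigma)\subset U$ for some $U\in\mathcal{U}$. Using hypothesis \ref{basis_like} and further subdivision if needed, assign to each simplex $\sigma$ a member $U_\sigma\in\mathcal{U}$ with $f(\sigma)\subset U_\sigma$ so that the assignment is \emph{nested}: $U_\tau\subset U_\sigma$ whenever $\tau$ is a face of $\sigma$. This is built from top-dimensional simplices downward, choosing each $U_\tau$ inside $\bigcap_{\sigma\supset\tau}U_\sigma$ using \ref{basis_like} together with the fact that fine enough subdivision of $P$ forces the compact set $f(\tau)$ into a single basis element of that intersection.

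With this combinatorial setup, construct $\widetilde{f}$ and a homotopy $H:h\widetilde{f}\simeq f$ rel $A$ simultaneously by induction on the skeleta of $P$, maintaining the invariant that $\widetilde{f}(\sigma)\subset h^{-1}(U_\sigma)$ and $H(\sigma\times I)\subset U_\sigma$ for every closed simplex $\sigma$. Extending over an $n$-simplex $\tau$ amounts to filling a diagram whose boundary data consists of a map $\partial D^n\to h^{-1}(U_\tau)$, automatically contained in $h^{-1}(U_\tau)$ by the nesting, together with a null-homotopy of its composite with $h$ inside $U_\tau$; since $h|h^{-1}(U_\tau):h^{-1}(U_\tau)\to U_\tau$ is a weak equivalence by hypothesis \ref{local_equiv}, the long exact sequences of the relevant pairs show that the obstruction to extension vanishes, and one chooses the extensions of $\widetilde{f}$ and $H$ together.

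The main obstacle is producing the coherent nested assignment $\sigma\mapsto U_\sigma$: without nesting, a lift previously constructed on a face of $\tau$ might not land in $h^{-1}(U_\tau)$, and the weak-equivalence invocation would fail. Hypothesis \ref{basis_like} is used precisely to ensure that intersections of basis elements can be refined to basis elements containing prescribed compact sets, and interleaving this refinement with the subdivision of $P$ is the technical heart of the proof.
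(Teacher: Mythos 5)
The paper does not prove Theorem~\ref{mccord_thm}; it imports it verbatim from McCord~\cite[Thm.~6]{mccord}, so there is no in-paper argument to compare against. Your proposal is a direct, self-contained proof in the obstruction-theoretic, skeleton-by-skeleton style. (McCord's original argument, which the present paper frames as extending Dold--Thom~\cite[Satz~2.2]{doldthom}, proceeds along different lines, so your route is a genuine alternative rather than a reconstruction.) The overall shape of what you write is sound: the reduction to a relative lifting assertion for compact polyhedral pairs, the choice of test pairs $(D^i,\partial D^i)$ and $(S^i\times I,\;S^i\times\partial I\cup\{*\}\times I)$ for surjectivity and injectivity on $\pi_i$, and the use of hypothesis~\ref{local_equiv} together with the homotopy extension property to extend over a single simplex are all correct and standard.

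The step to scrutinize is exactly the one you flag: producing the nested assignment $\sigma\mapsto U_\sigma$. As written, the argument is circular. Hypothesis~\ref{basis_like} is a pointwise condition: it supplies, for $y\in U\cap V$, a $W\in\mathcal{U}$ with $y\in W\subset U\cap V$, but it does not supply a single $W\in\mathcal{U}$ containing a prescribed compact subset of $U\cap V$. To force $f(\tau)$ into one member of $\mathcal{U}$ sitting inside $\bigcap_{\sigma\supset\tau}U_\sigma$ you must subdivide $\tau$ further; but subdividing $\tau$ also subdivides every simplex having $\tau$ as a face, which changes the collection $\{\sigma:\sigma\supset\tau\}$ and hence the very intersection you were trying to refine into. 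Saying ``choose a fine enough subdivision'' does not by itself escape this regress, and this is where a reader would balk.

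The regress can be broken, and the missing ingredient is a downward induction on dimension with \emph{inherited} assignments. Let $n=\dim P$. After an initial Lebesgue subdivision, assign $U_\sigma\in\mathcal{U}$ to every $n$-simplex. Suppose assignments exist in all dimensions $>d$, with nesting verified there. For each $d$-simplex $\tau$, set $V_\tau:=\bigcap\{U_\sigma:\sigma\supsetneq\tau,\ \dim\sigma>d\}$; then $f(\tau)\subset V_\tau$, and by~\ref{basis_like} the members of $\mathcal{U}$ contained in $V_\tau$ cover $V_\tau$, so $\{f^{-1}(W):W\in\mathcal{U},\ W\subset V_\tau\}$ has a Lebesgue number for $\tau$. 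Take an iterated barycentric subdivision fine enough for all (finitely many) $d$-simplices at once. The key observation is that every simplex $\sigma'$ of the new complex of dimension $>d$ may \emph{inherit} $U_{\sigma'}:=U_\sigma$ from its carrier $\sigma$ in the old complex; this preserves $f(\sigma')\subset U_{\sigma'}$ and the previously established nesting because carriers respect the face relation. Only the new $d$-simplices whose carrier is still $d$-dimensional need a genuinely new choice, and the Lebesgue number now makes that choice possible inside the relevant $V_\tau$. After $n+1$ stages the nested assignment exists globally, and your skeletal induction then goes through as described. With this supplied the proposal is correct; without it, the heart of the argument is asserted rather than proved.
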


The statements of Theorems~\ref{loc_contr} and~\ref{mccord_thm} immediately yield the following.\ss

\begin{corollary}\label{weak_equiv}
The orbit space quotient map $q:M\to\M$ is a weak homotopy equivalence.
\end{corollary}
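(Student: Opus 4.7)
The plan is to apply McCord's theorem (Theorem~\ref{mccord_thm}) directly to the quotient map $q:M\to\M$, using the open cover of $\M$ already furnished by Theorem~\ref{loc_contr}. Specifically, I would take $\mathcal{U}$ to be the collection obtained by taking the union, over all $\sqb{p}\in\M$, of the countable neighborhood bases $\mathcal{N}\sqb{p}$ supplied by Theorem~\ref{loc_contr}; this is manifestly an open cover of $\M$.

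To verify hypothesis~\ref{basis_like}, suppose $U,V\in\mathcal{U}$ and $\sqb{p}\in U\cap V$. Since $\mathcal{N}\sqb{p}\subset\mathcal{U}$ is a neighborhood basis at $\sqb{p}$, some $W\in\mathcal{N}\sqb{p}$ satisfies $\sqb{p}\in W\subset U\cap V$, and $W$ lies in $\mathcal{U}$ by construction.

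To verify hypothesis~\ref{local_equiv}, I would fix $U\in\mathcal{U}$ and exploit the two structural features of $U$ built into Theorem~\ref{loc_contr}: first, $U$ strong deformation retracts to a single orbit $\sqb{p}$, so $U$ is contractible; second, $q^{-1}(U)$ is homeomorphic to $\R^m$, so $q^{-1}(U)$ is contractible as well. Hence every homotopy group of either space is trivial, and the restriction $\left.q\right|q^{-1}(U):q^{-1}(U)\to U$ automatically induces isomorphisms on all $\pi_i$; it is therefore a weak homotopy equivalence.

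McCord's theorem then delivers the corollary. I do not anticipate any genuine obstacle, since the substantive content lives entirely in Theorem~\ref{loc_contr}, which has already established both the local contractibility of $\M$ and the local Euclidean structure of the corresponding $q$-saturations. The present argument is just the bookkeeping needed to splice those two facts together through the McCord criterion.
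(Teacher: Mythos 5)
Your proof is correct and takes the same approach as the paper, which simply states that Theorems~\ref{loc_contr} and~\ref{mccord_thm} immediately yield the corollary; you have supplied the elementary bookkeeping (choice of cover, verification of hypotheses~\ref{basis_like} and~\ref{local_equiv} via contractibility of both $Z$ and $q^{-1}(Z)\cong\R^m$) that the paper leaves to the reader.
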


\begin{remarks}\label{weak_equiv_remarks}
\Item(1)! As a consequence of Whitehead's theorem, each weak homotopy equivalence \hbox{$h:X\to Y$} also induces isomorphisms:
\[
	h_{\ast}:H_{i}(X;G)\to H_{i}(Y;G) \quad \textnormal{and} \quad h^{\ast}:H^{i}(Y;G)\to H^{i}(X;G)
\]
for all $i\ge 0$ and all coefficient groups $G$~\cite[p.~365]{hatcher}.

\Item(2)! Surjectivity of $q_{\sharp}:\pi_{1}(M,p)\to\pi_{1}(\M,\sqb{p})$ may be deduced in several alternative ways, all utilizing Theorem~\ref{loc_contr}. Namely, by~\cite[Thm.~1.1]{cgm}, Armstrong's~~\cite[Prop.~3(e)]{armstrong_82}, or by Theorem~\ref{pathlifting} below.

\Item(3)! In an earlier version of this paper, injectivity of $q_{\sharp}:\pi_{1}(M,p)\to\pi_{1}(\M,\sqb{p})$ was deduced by constructing the universal cover of $\M$~\cite{cgv1} (recall that classical covering space theory does not require any Hausdorff hypothesis~\cite[\S~1.3]{hatcher}). We briefly recall this construction since it may be of independent interest. Let $c:\Mt\to M$ be the (smooth) universal covering of $M$. Direct calculation shows that $f\circ c:\Mt\to\R$ is a Morse function. The vector field $v$ pulls back to $\wt{v}:\Mt\to T\Mt$ and $\wt{v}$ is an $(f\circ c)$-gradient. Thus, the flow $\Phi$ lifts (smoothly) to $\Mt$. The quotient $\Mtq$ of this lifted flow turns out to be the universal cover of $\M$ which quickly shows that $q_{\sharp}$ is injective on fundamental groups.

\Item(4)! As fibers of $q$ are contractible, Corollary~\ref{weak_equiv} implies that $q$ is a \emph{quasifibration}~\cite[p.~479]{hatcher} (see also~\cite{doldthom,mccord}).
\end{remarks}

The last remark raises the question of whether $q:M\to\M$ is a \emph{Serre fibration} (i.e., has the homotopy lifting property for disks $D^k$)~\cite[p.~376]{hatcher}. The following theorem is a partial positive answer, where we may further specify arbitrary lifts of both endpoints.

\begin{theorem}\label{pathlifting}
The map $q:M\to \M$ has path lifting. More specifically, let $\alpha:[0,1]\to\M$ be continuous, 
$e_{0}\in q^{-1}(\alpha(0))$, and $e_{1}\in q^{-1}(\alpha(1))$. Then, there exists a continuous function $\wt{\alpha}:[0,1]\to M$ such that $q\circ\wt{\alpha}=\alpha$, $\wt{\alpha}(0)=e_0$, and $\wt{\alpha}(1)=e_1$.
\end{theorem}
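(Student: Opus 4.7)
The plan is to lift $\alpha$ piecewise using the neighborhood basis supplied by Theorem~\ref{loc_contr}, and then rectify the terminal endpoint by composing with the flow $\Phi$. First, using compactness of $\alpha([0,1])$ and the Lebesgue number lemma applied to an open cover by neighborhoods from the bases produced in Theorem~\ref{loc_contr}, I would choose a partition $0 = u_0 < u_1 < \cdots < u_n = 1$ so that each image $\alpha([u_i, u_{i+1}])$ lies inside a single such neighborhood $Z_i$, of one of the two forms $q(U(p_i))$ or $q(\psi(\B(p_i, r_i)))$ described in the proof of Theorem~\ref{loc_contr}.

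Next, I would prove a local lifting lemma: for each such $Z$, each continuous $\beta\colon[a,b]\to Z$, and each prescribed $x_a\in q^{-1}(\beta(a))$, there is a continuous lift $\wt{\beta}\colon[a,b]\to M$ with $\wt{\beta}(a)=x_a$. In the non-critical case $Z=q(U(p))$, the diffeomorphism $H\colon\R\times\R^{m-1}\to U(p)=q^{-1}(Z)$ from the proof of Theorem~\ref{loc_contr} presents $q|$ as projection onto the second factor; writing $x_a=H(t_a,\eta^{-1}(\beta(a)))$, the formula $\wt{\beta}(s):=H(t_a,\eta^{-1}(\beta(s)))$ does the job. In the critical case $Z=q(\psi(\B(p,r')))$, Claim~\ref{unions_homeo} identifies $q^{-1}(Z)$ with the saturation $\bigcup_{t\in\R}\Omega_t(\B)\cong\R^m$ equipped with the hyperbolic flow $\Omega$, and the lift must be built in this standard model, exploiting the flow-equivariant contraction $G(x,s)=sx$ from the proof of Theorem~\ref{loc_contr} to pick representatives on each orbit $\beta(s)$ whose norms shrink continuously to $0$ whenever $\beta(s)\to[p]$. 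Concatenating these local lifts starting from $\wt{\alpha}(0)=e_0$ produces a continuous lift $\wt{\alpha}\colon[0,1]\to M$.

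Finally, $\wt{\alpha}(1)$ lies on the same orbit as $e_1$. If $\alpha(1)$ is a critical orbit then $\wt{\alpha}(1)=e_1$ is forced; otherwise $e_1=\Phi_T(\wt{\alpha}(1))$ for a unique $T\in\R$, and I would replace $\wt{\alpha}$ by $s\mapsto\Phi_{sT}(\wt{\alpha}(s))$. Because $\Phi_t$ preserves every orbit and fixes every critical point, the result is still a continuous lift of $\alpha$ that starts at $e_0$ and now ends at $e_1$. I expect the main obstacle to be the critical case of the local lifting lemma: $q$ admits no continuous global section over $Z$ near $[p]$, since in the standard model the axial orbits lie in every neighborhood of $[0]$ yet do not themselves contain $0$, so a naive pointwise choice cannot be continuous at parameters where $\beta(s)=[p]$; the lift must therefore be assembled by hand using the hyperbolic structure together with the contraction $G$ to control behavior precisely where $\beta(s)$ meets the critical orbit.
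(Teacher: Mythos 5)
Your reduction via the Lebesgue number lemma is sound, your non-critical local lift $\wt{\beta}(s) := H\bigl(t_a, \eta^{-1}(\beta(s))\bigr)$ is exactly right (it is Claim~\ref{basic_lift} with $\tau$ constant), and your endpoint-fixing trick of replacing $\wt{\alpha}$ by $s\mapsto\Phi_{sT}(\wt{\alpha}(s))$ is a valid, slightly cleaner alternative to the paper's method of prescribing $e_1$ during the final local lift: it works because $\Phi$ preserves each orbit and fixes each critical point, so $q\circ\Phi_{sT}\circ\wt{\alpha}=\alpha$ still holds and the critical parameters are unaffected.

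However, there is a genuine gap at the heart of your argument: the local lifting lemma over a critical-type neighborhood $Z = q\bigl(\psi(\B(p,r'))\bigr)$ is only asserted, not proved. You correctly flag it as ``the main obstacle'' and correctly observe that $q$ has no continuous section near $\sqb{p}$, but the idea you sketch --- using the flow-equivariant contraction $G$ to ``pick representatives on each orbit $\beta(s)$ whose norms shrink continuously to $0$ whenever $\beta(s)\to\sqb{p}$'' --- is not an argument, and the difficulty it faces is precisely what the rest of the paper's proof is for. The problem is that $\beta^{-1}(\sqb{p})\subset[a,b]$ can be an arbitrary closed set with accumulation points (a Cantor set, say); on that set the lift is forced to equal $p$, while on each complementary open interval the lift must leave $p$, wander among non-critical orbits, and return, and one must show these excursions can be made to shrink uniformly as one approaches a limit point. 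The paper achieves this in stages: Claim~\ref{controlled_lift} uses Michael's selection theorem to produce a continuous time function $\tau$ that confines the lift inside any prescribed ball $\psi(\B(p,r'))$; Claims~\ref{one_endpoint_sing_small} and~\ref{both_endpoints_sing_small} iterate this over a shrinking sequence of balls to bring the lift continuously into $p$ at a singular endpoint; and Case~\ref{L_non-null} stitches the countably many complementary intervals $V$ of $\alpha^{-1}(\Sigma)$ together, choosing the confining radius on $V$ proportional to $\sup_{V}\tn{dist}(\cdot,L)$ so that excursions are forced to shrink near limit points $L$ of $\alpha^{-1}(\Sigma)$. None of this machinery --- nor any substitute for it --- appears in your proposal, so as written it proves only Case~\ref{alpha_inv_null} (and, with a little care, Case~\ref{L_null_ainv_non-null}).
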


\begin{proof}
We begin with some preliminary observations.
Recall that: \[M=S(f)\sqcup (M-S(f))\] where $S(f)\subset M$ is closed, discrete, and $q$-saturated,
and $M-S(f)\subset M$ is open and $q$-saturated.
Define $\Sigma:=q(S(f))$, so: \[\M=\Sigma\sqcup q(M-S(f)).\]
Recall that $q$ is an open map, so $q(M-S(f))$ is open in $\M$ and $\Sigma$ is closed in $\M$.
If $p\in S(f)$, then $\sqb{p}=\{p\}$, $\sqb{p}$ is closed in $\M$, and $\sqb{p}$ is open (and closed) in $\Sigma$.
In particular, $\Sigma$ is discrete in $\M$. As $[0,1]$ is compact, $\tn{Im}(\alpha)\cap\Sigma$ is finite (possibly empty).
Also, $\alpha^{-1}(\Sigma)\subset[0,1]$ is closed and compact.\ss

Let $L$ denote the set of limit points of $\alpha^{-1}(\Sigma)$ in $[0,1]$.
Then, $L\subset\alpha^{-1}(\Sigma)$ and $L$ is closed in $[0,1]$.
We prove Theorem~\ref{pathlifting} in three successively more subtle cases: $\alpha^{-1}(\Sigma)=\emptyset$, $\alpha^{-1}(\Sigma)\neq\emptyset$ and $L=\emptyset$, and $L\neq\emptyset$.\ss

In any case, observe that $\wt{\alpha}$ is rigidly prescribed on the closed set $\alpha^{-1}(\Sigma)\subset[0,1]$.
Namely, $q|:S(f)\to\Sigma$ is a homeomorphism ($q|$ is open since $\Sigma$ is discrete in $\M$).
So, we must define:
\begin{equation}\begin{split}\label{lift_on_inv_Sigma}
\xymatrix@R=0pt{
     \alpha^{-1}(\Sigma)          \ar[r]^-{\wt{\alpha}} & S(f)\subset M  \\
    s   \ar@{|-{>}}[r]          &  q|^{-1}\circ\alpha(s) }
\end{split}\end{equation}
which is continuous.
It remains to lift $\alpha$ appropriately on the complement:
\[
	[0,1]-\alpha^{-1}(\Sigma) = \alpha^{-1}((\M)-\Sigma).
\]
This complement is the disjoint union of at most countably many open, connected subsets of $[0,1]$.\ss

The next claim will be our workhorse for constructing lifts.
Recall from the proof of Theorem~\ref{loc_contr} above that if $y\in M-S(f)$ and $f(y)$ is a regular value of $f$, then associated to $y$ are maps $h$, $H$, and $\eta$.

\begin{claim}\label{basic_lift}
Let $\beta:[a,b]\to\M$ be continuous such that $\tn{Im}(\beta)\subset q\left(U(y)\right)$ for some $y\in M-S(f)$ where $f(y)$ is a regular value of $f$.
Let $\tau:[a,b]\to\R$ be any continuous function.
Define the continuous function:
\begin{equation}\begin{split}\label{beta_lift}
\xymatrix@R=0pt{
     [a,b]          \ar[r]^-{\wt{\beta}} &  U(y)\subset M  \\
    s   \ar@{|-{>}}[r]          &  H\left(\tau(s),\eta^{-1}\circ\beta(s)\right) }
\end{split}\end{equation}
Then, $\wt{\beta}$ is a lift of $\beta$ (i.e., $q\circ\wt{\beta}=\beta$).
\end{claim}

\begin{proof}[Proof of Claim~\ref{basic_lift}]
Immediate by the diagram~\eqref{H_diagram}.
\end{proof}

\begin{remark}
Every continuous lift of $\beta$ to $M$ arises as in Claim~\ref{basic_lift} for some continuous $\tau$.
If $g$ is a continuous lift of $\beta$, then consider the map \hbox{$\tau=\tn{pr}_1\circ H^{-1}\circ g$} where $\tn{pr}_1:\R\times\R^{m-1}\to\R$ is projection.
\end{remark}

The simplest $\tau$, namely linear functions, provide just enough control over lifts to prove the first main case of Theorem~\ref{pathlifting} as we now show.

\begin{claim}\label{basic_endpoint_control}
Let $\beta:[a,b]\to\M$ be continuous such that $\tn{Im}(\beta)\subset q\left(U(y)\right)$ for some $y\in M-S(f)$ where $f(y)$ is a regular value of $f$.
Suppose we are given $e_a\in q^{-1}(\beta(a))$, $e_b\in q^{-1}(\beta(b))$, or both.
Then, there exists $\wt{\beta}$ a continuous lift of $\beta$ such that $\wt{\beta}(a)=e_a$, $\wt{\beta}(b)=e_b$, or both (respectively).
\end{claim}

\begin{proof}[Proof of Claim~\ref{basic_endpoint_control}]
Let $(t_a,x_a):=H^{-1}(e_a)$ and $(t_b,x_b):=H^{-1}(e_b)$.
If both endpoints are specified, then define $\tau:[a,b]\to\R$ to be linear such that $\tau(a)=t_a$ and $\tau(b)=t_b$; now, apply Claim~\ref{basic_lift}. If only one endpoint is specified, then reason similarly with $\tau$ constant.
\end{proof}

\begin{case}\label{alpha_inv_null}
Theorem~\ref{pathlifting} holds when $\alpha^{-1}(\Sigma)=\emptyset$.
\end{case}

\begin{proof}[Proof of Case~\ref{alpha_inv_null}]
Choose a finite subdivision $0=s_0<s_1<\cdots<s_N=1$ so that for each $0\leq i\leq N-1$ there exists
$y_i \in M-S(f)$ where $f(y_i)$ is a regular value of $f$ and:
\[
 \alpha([s_i,s_{i+1}]) \subset q\left(U(y_i)\right).
\]
Lift $\alpha|[s_0,s_1]$ using Claim~\ref{basic_endpoint_control} with $q^{-1}(\alpha(s_0))=e_0$ (the left endpoint) specified. Next, lift $\alpha|[s_1,s_2]$ using Claim~\ref{basic_endpoint_control} with the left endpoint specified to agree with the right endpoint of the previous lift. Repeat this process process until the last subinterval $[s_{N-1},s_{N}]$ where we further specify the right endpoint \hbox{$q^{-1}(\alpha(s_N))=e_1$}.
This completes the proof of Case~\ref{alpha_inv_null} of Theorem~\ref{pathlifting}.
\end{proof}

In Case~\ref{alpha_inv_null}, very little control of the lifts was required; we just had to match up endpoints.
When $\alpha^{-1}(\Sigma)\neq\emptyset$, more control is necessary. The next claim provides controlled lifts. We use the notation specified in the proof of Theorem~\ref{loc_contr} above immediately following the proof of Claim~\ref{equiv_claim}.

\begin{claim}\label{controlled_lift}
Let $\beta:[a,b]\to\M$ be continuous such that:
\begin{equation}\label{im_hypo}
	\tn{Im}(\beta)\subset q\left(U(y)\right) \cap q(\psi(\B(p,r')))
\end{equation}
for some $y\in M-S(f)$ where $f(y)$ is a regular value of $f$, some $p\in S(f)$, and some $0<r'\leq r(p)$.
Then, there exists $\wt{\beta}$ a continuous lift of $\beta$ such that $\tn{Im}(\wt{\beta})\subset\psi(\B(p,r'))$.
Further, given \hbox{$e_a\in q^{-1}(\beta(a))\cap \psi(\B(p,r'))$}, \hbox{$e_b\in q^{-1}(\beta(b))\cap \psi(\B(p,r'))$}, or both, we may choose $\wt{\beta}$ so that additionally \hbox{$\wt{\beta}(a)=e_a$}, \hbox{$\wt{\beta}(b)=e_b$}, or both.
\end{claim}

\begin{proof}[Proof of Claim~\ref{controlled_lift}]
Let $\mathcal{P}(\R)$ denote the power set of $\R$.
We define the \emph{carrier} $\phi:[a,b]\to\mathcal{P}(\R)$ by:
\[
	\phi(s):=\left\{t\in\R \mid H(t,\eta^{-1}\circ\beta(s))\in\psi(\B(p,r'))\right\}.
\]
Recall that $H(t,x)=\Phi_t\left(h(x)\right)$.
So, $\phi(s)$ equals the set of times for which the integral curve of $v$ beginning at $h\left(\eta^{-1}\circ\beta(s)\right)$ lies in the open set $\psi(\B(p,r'))$.
By~\eqref{im_hypo} and since $\psi$ is a flow equivalence (see the proof of Theorem~\ref{loc_contr}), $\phi(s)$ is a nonempty, connected open interval (possibly infinite on one side) for each $s\in[a,b]$.
We assert that $\phi$ is lower semi-continuous (see~\cite[p.~365]{michael} for a definition).
But, this is immediate by continuity of $\Phi$, openness of $\eta$, and continuity of $\beta$.
Thus, Michael's selection theorem~\cite[Thm.~3.1$'''$]{michael}
implies that $\phi$ admits a \emph{selection}. That is, there exists a continuous function $\tau:[a,b]\to\R$ such that $\tau(s)\in\phi(s)$ for every $s\in[a,b]$.
Now, apply Claim~\ref{basic_lift} using this $\tau$. The resulting lift $\wt{\beta}$ behaves as desired. If $e_a$ is specified, then alter $\phi$ by defining $\phi(a):=\{t_a\}$ where $e_a=H(t_a,\eta^{-1}(\beta(a)))$. It is easy to check that this modified $\phi$ is lower semi-continuous. Argue similarly in case $e_b$ is specified.
\end{proof}

\begin{remark}
The previous claim used only the relatively simple implication (a)$\Rightarrow$(b) of Michael's Theorem~3.1$'''$ (see~\cite[\S5]{michael}).
\end{remark}

Repeated applications of Claims~\ref{basic_endpoint_control} and~\ref{controlled_lift} yield the following.

\begin{claim}\label{one_endpoint_sing}
Let $\beta:[a,b]\to\M$ be continuous such that \hbox{$\beta^{-1}(\Sigma)=\{b\}$}.
Then, there exists $\wt{\beta}$ a continuous lift of $\beta$.
Further, given \hbox{$e_a\in q^{-1}(\beta(a))$}, we may choose $\wt{\beta}$ so that additionally $\wt{\beta}(a)=e_a$.
Also, the analogous results hold in case \hbox{$\beta^{-1}(\Sigma)=\{a\}$}.
\end{claim}

\begin{proof}[Proof of Claim~\ref{one_endpoint_sing}]
It suffices to prove the first two conclusions.
For the first conclusion, we assume, without loss of generality, that $a=0$ and $b=1$.
Choose a countable subdivision:
\[
	0=s_0<s_1<s_2<s_3<\cdots<1
\]
such that $s_i\rightarrow1$ and, for each $i$, $\beta([s_i,s_{i+1}])\subset q\left(U(y_i)\right)$ for some $y_i\in M-S(f)$ where $f(y_i)$ is a regular value of $f$.
As $\beta(1)\in\Sigma$, there is a unique $p\in S(f)$ such that $\beta(1)=\sqb{p}$.
Define $\wt{\beta}(1):=p$ (we have no choice here).\ss

Recursively define $N_1<N_2<N_3<\cdots$ an increasing sequence of positive integers as follows.
The set $q\left(\psi(\B(p,r(p)))\right)$ is an open neighborhood of $\sqb{p}$ in $\M$.
As $\beta$ is continuous, there exists $N_1\in\Z^{+}$ such that
$\beta([s_{N_1},1])\subset q\left(\psi(\B(p,r(p)))\right)$.
Having constructed $N_1<\cdots<N_k$, choose $N_{k+1}>N_k$ such that
$\beta([s_{N_{k+1}},1])\subset q\left(\psi(\B(p,r(p)/(k+1)))\right)$.\ss

Now, we define $\wt{\beta}$ on $[0,1)$ by successively lifting $\beta|[s_i,s_{i+1}]$ for $i=0,1,2,\ldots$ as follows.
For each $i>0$, we tacitly assume that the lift of $\beta|[s_i,s_{i+1}]$ will agree with that of $\beta|[s_{i-1},s_{i}]$ at the common endpoint $s_i$.
For $i=0,\ldots,N_1-1$, lift $\beta|[s_i,s_{i+1}]$ using 
Claim~\ref{basic_endpoint_control} with $y_i\in M-S(f)$;
for $i=N_1-1$, we further specify that the lift send
$s_{N_1}$ into $q\left(\psi(\B(p,r(p)))\right)$.
For $i=N_{1},\ldots,N_2-1$, lift $\beta|[s_i,s_{i+1}]$ using
Claim~\ref{controlled_lift} with $y=y_i$ and $r'=r(p)/1$;
for $i=N_2-1$, we further specify that the lift send
$s_{N_2}$ into $q\left(\psi(\B(p,r(p)/2))\right)$.
Assume that $\beta|[0,s_{N_k}]$ has been so lifted.
For $i=N_{k},\ldots,N_{k+1}-1$, lift $\beta|[s_i,s_{i+1}]$ using
Claim~\ref{controlled_lift} with $y=y_i$ and $r'=r(p)/k$;
for $i=N_{k+1}-1$, we further specify that the lift send
$s_{N_{k+1}}$ into $q\left(\psi(\B(p,r(p)/(k+1)))\right)$.
This completes our definition of $\wt{\beta}$.\ss

It is clear by construction that $\wt{\beta}$ is a lift of $\beta$ and is continuous on $[0,1)$. For continuity at $1$, let $W$ be any neighborhood of $p=\wt{\beta}(1)$ in $M$.
There is $k\in\Z^{+}$ such that $\psi(\B(p,r(p)/k))\subset W$. Then, $\wt{\beta}$ sends $(s_{N_k},1]$ into $W$ as desired.\ss

For the second conclusion, we simply specify in the very first lift ($i=0$) that $s_0$ is sent to $e_0$.
\end{proof}

\begin{claim}\label{both_endpoints_sing}
Let $\beta:[a,b]\to\M$ be continuous such that $\beta^{-1}(\Sigma)=\{a,b\}$.
Then, there exists $\wt{\beta}$ a continuous lift of $\beta$.
\end{claim}

\begin{proof}[Proof of Claim~\ref{both_endpoints_sing}]
Without loss of generality, $a=0$ and $b=1$. Apply Claim~\ref{one_endpoint_sing} to each half $[0,1/2]$ and $[1/2,1]$ taking care to match up the lifts at $1/2$.
\end{proof}

We now prove the second main case of Theorem~\ref{pathlifting}.

\begin{case}\label{L_null_ainv_non-null}
Theorem~\ref{pathlifting} holds when $\alpha^{-1}(\Sigma)\neq\emptyset$ and $L=\emptyset$.
\end{case}

\begin{proof}[Proof of Case~\ref{L_null_ainv_non-null}]
In this case, $\alpha^{-1}(\Sigma)\neq\emptyset$ is a finite set of points in $[0,1]$.
Let \hbox{$\s_1<\s_2<\cdots<\s_n$} be the points in $\alpha^{-1}(\Sigma)$.
If $\s_1\neq0$, then lift $\alpha|[0,\s_1]$ using Claim~\ref{one_endpoint_sing} with $e_0$ specified.
Successively lift $\alpha|[\s_i,\s_{i+1}]$ using Claim~\ref{both_endpoints_sing} (adjacent endpoints are automatically matched up). If $\s_n\neq1$, then lift $\alpha|[\s_n,1]$ using Claim~\ref{one_endpoint_sing} with $e_1$ specified.
This completes the proof of Case~\ref{L_null_ainv_non-null} of Theorem~\ref{pathlifting}.
\end{proof}

The third main case of Theorem~\ref{pathlifting} will use a sharpened version of Claim~\ref{both_endpoints_sing}. First, we sharpen Claim~\ref{one_endpoint_sing}.

\begin{claim}[Addendum to Claim~\ref{one_endpoint_sing}]\label{one_endpoint_sing_small}
Assume further that
$\tn{Im}(\beta)\subset q(\psi(\B(p,r')))$ for some (unique) $p\in S(f)$ and some $0<r'\leq r(p)$.
Then, in all three conclusions of Claim~\ref{one_endpoint_sing}, we may choose $\wt{\beta}$ so that $\tn{Im}(\wt{\beta})\subset\psi(\B(p,r'))$.
\end{claim}

\begin{remark}\label{p_unique}
Uniqueness of $p\in S(f)$ is a consequence of two facts:
(i) for each $z\in S(f)$, $q(\psi(\B(z,r(z))))$ intersects $\Sigma=q(S(f))$ in exactly the single point $\sqb{z}$, and
(ii) $\beta^{-1}(\Sigma)$ equals $\{b\}$ or $\{a\}$.
In particular, $p=q^{-1}(\beta(b))$ or $p=q^{-1}(\beta(a))$.
\end{remark}

\begin{proof}[Proof of Claim~\ref{one_endpoint_sing_small}]
The proof is very similar to the proof of Claim~\ref{one_endpoint_sing}, except now we choose a sequence
$0=N_1<N_2<N_3<\cdots$
of integers such that $\beta([s_{N_{k}},1])\subset q\left(\psi(\B(p,r'/k)))\right)$. And, every individual lift is provided by Claim~\ref{controlled_lift}. Details are left to the reader.
\end{proof}

\begin{claim}[Addendum to Claim~\ref{both_endpoints_sing}]\label{both_endpoints_sing_small}
Assume further that
$\tn{Im}(\beta)\subset q(\psi(\B(p,r')))$ for some (unique) $p\in S(f)$ and some $0<r'\leq r(p)$.
Then, we may choose $\wt{\beta}$ so that $\tn{Im}(\wt{\beta})\subset\psi(\B(p,r'))$.
\end{claim}

\begin{proof}[Proof of Claim~\ref{both_endpoints_sing_small}]
Identical to that of Claim~\ref{both_endpoints_sing}, except using Claim~\ref{one_endpoint_sing_small}.
\end{proof}

We now prove the third and final main case of Theorem~\ref{pathlifting}.

\begin{case}\label{L_non-null}
Theorem~\ref{pathlifting} holds when $L\neq\emptyset$.
\end{case}

\begin{proof}[Proof of Case~\ref{L_non-null}]
Recall that $\wt{\alpha}$ is already (rigidly) prescribed on $\alpha^{-1}(\Sigma)$.
Notice that it suffices to lift $\alpha$ on:
\[
	K:=[\inf \alpha^{-1}(\Sigma), \sup \alpha^{-1}(\Sigma)] \subset [0,1].
\]
Indeed, given a continuous lift of $\alpha|K$, suppose $0\not\in\alpha^{-1}(\Sigma)$.
Lift $\alpha|[0,\inf \alpha^{-1}(\Sigma)]$ using Claim~\ref{one_endpoint_sing} with $e_0$ specified.
These two lifts agree at their common endpoint, namely $\inf \alpha^{-1}(\Sigma)$, so they
yield a continuous lift on $[0,\sup \alpha^{-1}(\Sigma)]$.
Argue similarly if $1\not\in\alpha^{-1}(\Sigma)$.
So, it remains to lift $\alpha|K$.
Without loss of generality, $K=[0,1]$.\ss

Consider an arbitrary subinterval:
\begin{equation}\label{Vinterval}
	V:=[\s_{-},\s_{+}]\subset[0,1]
\end{equation}
where $\s_{-}<\s_{+}$ and $[\s_{-},\s_{+}]\cap\alpha^{-1}(\Sigma)=\{\s_{-},\s_{+}\}$.
We lift $\alpha|V$ according to three mutually exclusive cases:
\begin{enumerate}[label=\tn{(\alph*)}]\setcounter{enumi}{0}
\item\label{no_p} For every $p\in S(f)$, $\alpha(V)\not\subset q(\psi(\B(p,r(p))))$.
\item\label{p_k} For some (unique) $p\in S(f)$ and (unique) $k\in\Z^{+}$, $\alpha(V)\subset q(\psi(\B(p,r(p)/k)))$ and
								$\alpha(V)\not\subset q(\psi(\B(p,r(p)/(k+1))))$.
\item\label{p_all_k} For some (unique) $p\in S(f)$ and every $k\in\Z^{+}$, $\alpha(V)\subset q(\psi(\B(p,r(p)/k)))$.
\end{enumerate}
In cases~\ref{p_k} and~\ref{p_all_k}, uniqueness of $p\in S(f)$ follows as in Remark~\ref{p_unique}. In particular, $\alpha(\s_{-})=\sqb{p}$ and $\alpha(\s_{+})=\sqb{p}$.\ss

In case~\ref{no_p}, lift $\alpha|V$ using Claim~\ref{both_endpoints_sing}.
In case~\ref{p_k}, lift $\alpha|V$ using Claim~\ref{both_endpoints_sing_small} with $r'=r(p)/k$.
In case~\ref{p_all_k}, define the continuous function:
\[
\xymatrix@R=0pt{
     [0,1]          \ar[r]^-{\delta} & [0,1]  \\
    s   \ar@{|-{>}}[r]          &  \tn{dist}(s,L) }
\]
Let $l(V):=\s_{+}-\s_{-}$ denote the length of $V$. Recall that $L\neq\emptyset$ and notice that:
\[
	0<\frac{l(V)}{2} \leq \sup (\delta|V) \leq 1.
\]
Lift $\alpha|V$ using Claim~\ref{both_endpoints_sing_small} with $r':=(\sup (\delta|V))\cdot r(p)$.
This completes our definition of $\wt{\alpha}$.\ss

There exist at most countably many such subintervals $V$, and any two of them are either identical, disjoint, or they intersect at a single shared endpoint (which necessarily lies in $\alpha^{-1}(\Sigma)$).
Also, every $s\in[0,1]$ lies in either $\alpha^{-1}(\Sigma)$ or in the interior of a unique such $V$.
It follows that $\wt{\alpha}$ is a well-defined lift of $\alpha$.
It remains to verify that $\wt{\alpha}$ is continuous.\ss

Evidently, continuity of $\wt{\alpha}$ is only in question at points of $L$.
So, let $s_0\in L$.
We verify continuity of $\wt{\alpha}$ at $s_0$ from the left; continuity from the right is similar.
If $\alpha^{-1}(\Sigma)$ does not accumulate on $s_0$ from the left, then there is nothing to show.
So, assume otherwise.
Let $p\in S(f)$ be the unique point such that $\alpha(s_0)=\sqb{p}$.
By construction, $\wt{\alpha}(s_0)=p$.
The open sets $\psi(\B(p,r(p)/k))$, $k\in\Z^{+}$, form a countable basis at $p$ in $M$.
Thus, for each fixed  $k\in\Z^{+}$ it suffices to produce $\e>0$ such that \hbox{$\wt{\alpha}((s_0-\e,s_0])\subset\psi(\B(p,r(p)/k))$}.
So, fix $k\in\Z^{+}$.
As $q$ is open and $\alpha$ is continuous, the set:
\begin{equation}\label{Bknhbd}
	W:=\alpha^{-1}(q(\psi(\B(p,r(p)/k))))\subset[0,1]
\end{equation}
is an open neighborhood of $s_0$ in $[0,1]$.
By~\eqref{Bknhbd}, there exists $\e_0>0$ such that:
\begin{equation}\label{e0}
 (s_0-\e_0,s_0]\subset W \quad \tn{and} \quad \e_0<1/k.
\end{equation}
As $\alpha^{-1}(\Sigma)$ accumulates on $s_0$ from the left, there exists:
\begin{equation}\label{sigma}
	\s\in\alpha^{-1}(\Sigma)\cap(s_0-\e_0,s_0).
\end{equation}
Define $\e:=s_0-\s>0$.
We verify that this $\e$ works. Any point in $\alpha^{-1}(\Sigma)\cap(s_0-\e,s_0]$ lies in $W$ and, hence, maps by $\wt{\alpha}$ to $p$ itself.
Next, let $s\in(s_0-\e,s_0]-\alpha^{-1}(\Sigma)$. Then, $s$ lies in the interior of a unique closed interval $V$ as in~\eqref{Vinterval}.
By~\eqref{sigma} and~\eqref{e0}:
\begin{equation}\label{maincont}
	V\subset [s_0-\e,s_0] \subset (s_0-\e_0,s_0]\subset W.
\end{equation}
In case~\ref{p_k}, the lift provided by Claim~\ref{both_endpoints_sing_small} sends $V$ into $\psi(\B(p,r(p)/k))$, so we are done.
In case~\ref{p_all_k}, \eqref{maincont} implies that:
\[
	0<\sup (\delta|V) \leq \e <\e_0 <1/k.
\]
So, again the lift provided by Claim~\ref{both_endpoints_sing_small} sends $V$ into $\psi(\B(p,r(p)/k))$.
This completes the proof of Case~\ref{L_non-null} of Theorem~\ref{pathlifting}.
\end{proof}

The proof of Theorem~\ref{pathlifting} is complete.
\end{proof}

\begin{remark}
Claim~\ref{controlled_lift} has a (more precise) closed analogue. Here, the hypothesis $\tn{Im}(\beta)\subset q(\psi(\B(p,r')))$ is replaced with the hypothesis $\tn{Im}(\beta)\subset q\left(\psi\left(\overline{\B(p,r')}\right)\right)$, where $0<r'<r(p)$ and $\overline{\B(p,r')}$ is the closure of $\B(p,r')$ in $\B(p,r(p))$.
The conclusion becomes $\tn{Im}(\wt{\beta})\subset \psi\left(\overline{\B(p,r')}\right)$. The proof of this fact requires a bit more work, again employing Michael's selection theorem~\cite{michael}. We suppress this result since the open version of Claim~\ref{controlled_lift} is sufficient for our purposes.
With some tinkering, one may even avoid the use of Michael's selection theorem completely, but at present we see no advantage to such an approach.
\end{remark}

\section{Examples and Questions}\label{examples}

\begin{example}\label{sphere_ex}
Consider the standard longitudinal flow on $S^n$ as in Figure~\ref{sphere}.
\begin{figure}[h!]
    \centerline{\includegraphics{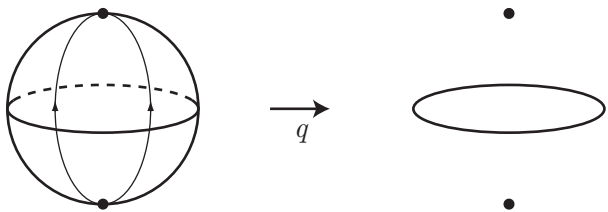}}
    \caption{Longitudinal flow on $S^n$ and its orbit space.}
    \label{sphere}
\end{figure}
Corollary~\ref{weak_equiv} implies that $q$ is a weak homotopy equivalence. The orbit space $q(S^n)$ is the \emph{non-Hausdorff suspension} of $S^{n-1}$~\cite[p.~472]{mccord}. A natural inductive argument yields a topological space $X_n$ containing $2n+2$ points and a weak homotopy equivalence $S^n\to X_n$; this result was first observed by McCord~\cite[\S8]{mccord} (cf. \cite[p.~38]{barmak}). Clearly there is no weak homotopy equivalence $X_n\to S^n$.
\end{example}

\begin{example}\label{line_2_origins}
We give an instance of the general situation studied in Section~\ref{main_results} and where $S(f)=\emptyset$, but $\M$ is non-Hausdorff.
Let $M=\R^2-\{(0,0)\}$, let $f:M\to\R$ be $f(x,y)=y$, and
let $v:M\to TM$ be the constant $(0,1)$ vector field.
Note that $f$ is a Morse function, $S(f)=\emptyset$, and $v$ is an $f$-gradient ($v$ is not complete, but this is immaterial in view of Claim~\ref{scale_f-gradient}).
The orbit space $\M$ is readily seen to be homeomorphic (diffeomorphic, in fact) to $\mathcal{L}$, the \emph{line with two origins}~\cite[pp.~15]{hirsch}.
Recall that $\mathcal{L}$ is the quotient of $\R\times\{1,2\}$ by the equivalence relation generated by $(x,1)\equiv(x,2)$ for each $x\neq0$; $\mathcal{L}$
is a smooth, non-Hausdorff $1$-manifold. By Corollary~\ref{weak_equiv}, $q:M\to\mathcal{L}$ is a weak homotopy equivalence. Any map from $\mathcal{L}$ to a Hausdorff space factors through $\R$, so there is no weak homotopy equivalence $\mathcal{L}\to M$.
\end{example}

\begin{remark}
It is instructive to compare and contrast Examples~\ref{sphere_ex} and~\ref{line_2_origins} with the orbit spaces of the standard hyperbolic flows on $\R^1$ and $\R^2$ of various indexes (see~\eqref{omega_flow}), all of which are contractible.
In particular, the index $1$ flow on $\R^2$, with the origin removed, is the double cover of $M$ from Example~\ref{line_2_origins}.
\end{remark}

\begin{example}\label{pi_1_observations}
For non-gradient vector fields, $q_{\sharp}$ is not necessarily injective on fundamental groups:
consider the obvious circle action on any product $M=N\times S^1$.
Or, if $v:M\to TM$ has a dense orbit $\sqb{p}$, then $\M$ strong deformation retracts to $\sqb{p}$ (the homotopy is the identity when $t=0$ and sends all points to $\sqb{p}$ for all $t\in(0,1]$).
Is $q_{\sharp}$ a surjection on fundamental groups for \emph{arbitrary} smooth vector fields (cf.~\cite{armstrong_82}, \cite{cgm})?
\end{example}

\begin{conjecture}\label{slsc_conj}
The orbit space associated to an \emph{arbitrary} smooth vector field on a Hausdorff manifold $M$, having a countable basis and empty boundary, is semilocally simply-connected.
\end{conjecture}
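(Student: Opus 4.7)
The plan is to verify that every $\sqb{p}\in\M$ has a neighborhood $W$ such that the inclusion $W\hookrightarrow\M$ induces the trivial map on $\pi_{1}$ based at $\sqb{p}$, splitting into cases by the local behavior of $v$ at $p$. After scaling $v$ by a positive smooth function (the construction in Claim~\ref{scale_f-gradient} adapts verbatim to any smooth vector field once one drops the non-degeneracy clause), I may assume $v$ is complete and hence $q$ is an open map.

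If $v(p)\neq 0$, the flow-box theorem produces a codimension-$1$ open transversal disk $T\ni p$ and $\varepsilon>0$ so that $(t,x)\mapsto\Phi_{t}(x)$ is a diffeomorphism $(-\varepsilon,\varepsilon)\times T\to U\subset M$. A loop $\gamma:S^{1}\to q(U)$ based at $\sqb{p}$ lifts fiberwise to a (possibly discontinuous) map to $T$; the subtle point is that recurrence of orbits of $v$ back into $U$ can identify distinct points of $T$ inside $\M$, so $q(U)$ is a quotient of $T$ by a Poincar\'e-type return relation rather than a copy of $T$. I would use a selection argument in the spirit of Claim~\ref{controlled_lift} to produce a continuous lift of $\gamma$ into $U$ after shrinking $T$, then contract that lift inside the Euclidean $U$ and push the contraction down through $q$.

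For a zero $p\in v^{-1}(0)$ with $Dv(p)$ hyperbolic, Hartman--Grobman supplies a local topological conjugacy between $v$ near $p$ and the standard linear hyperbolic flow $\omega$ of some index $k$. The bidisk construction in the second case of the proof of Theorem~\ref{loc_contr} then transfers: the topological Lyapunov function $\sum_{i\le k}x_{i}^{2}-\sum_{j>k}x_{j}^{2}$, pulled back by the conjugacy, replaces the Morse function $f$ in Claim~\ref{equiv_claim} to guarantee that $\B(R)$ projects faithfully to a neighborhood of $\sqb{p}$ for $R$ small, and the retraction~\eqref{G_homotopy} supplies the nullhomotopy.

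The hard part will be the full generality of the conjecture: isolated non-hyperbolic zeros with no Lyapunov structure (elliptic and parabolic sectors, Jordan-block degeneracies), non-isolated zero sets---which can be any closed subset of $M$---and recurrent non-wandering dynamics (Anosov flows, irrational translations on tori, wild minimal sets) that create complicated identifications in $\M$ even when $v$ has no zeros. In these regimes there is no analogue of Claim~\ref{equiv_claim} to bound how orbits re-enter a small neighborhood, small orbit-boxes can fail to project to simply connected subsets of $\M$, and the selection-theoretic argument above breaks down. A successful proof will likely require either a topological classification of local flow behavior up to equivalence, or a soft argument extracting nullhomotopies directly from local simple connectedness of $M$ together with openness of $q$ without appealing to a local dynamical model. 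Producing such an argument, robust against arbitrary recurrence, is what keeps Conjecture~\ref{slsc_conj} open.
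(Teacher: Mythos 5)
The statement you were asked to address is a \emph{conjecture}, and the paper offers no proof of it; it is posed in Section~\ref{examples} precisely because the authors' methods do not extend to arbitrary vector fields. Your proposal correctly identifies this and is honest that it does not constitute a proof. That said, two of your intermediate claims are also over-optimistic and would fail on closer inspection, and it is worth spelling out why, since the failure mode is exactly the one the authors' Morse hypothesis is designed to exclude.

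In your flow-box case ($v(p)\neq 0$), you invoke a selection argument ``in the spirit of Claim~\ref{controlled_lift}'' to lift a loop $\gamma$ from $q(U)$ back into $U$ and then contract it. But the proof of Claim~\ref{controlled_lift} leans on the fact that, because $f$ is strictly monotone along every orbit, the carrier $\phi(s)$ (the set of return times to a fixed neighborhood) is always a single open interval; only then is $\phi$ a convex-valued lower semi-continuous carrier to which Michael's selection theorem applies. For a general $v$ with recurrence, an orbit may re-enter a small flow-box $U$ in infinitely many disjoint time intervals, so $\phi(s)$ is a disconnected subset of $\R$ and the selection machinery has no foothold. This is precisely the obstruction you concede at the end, so the flow-box paragraph does not actually buy anything beyond the non-recurrent case. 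Your hyperbolic-zero paragraph has a subtler version of the same gap: you propose replacing $f$ by the pulled-back local Lyapunov function $L$. Inside the conjugating chart $L$ is strictly monotone along nonconstant orbits, but the crucial step of Claim~\ref{equiv_claim} is \emph{global}. There, an orbit that leaves the bidisk through $A$ (where $f>f(p)$) and later re-enters through $B$ (where $f<f(p)$) contradicts the global monotonicity of $f$. A local $L$ gives no information about the excursion of the orbit outside the Hartman--Grobman chart, so such a return cannot be ruled out, and the bidisk need not project to a faithfully embedded, hence simply-connected, neighborhood in the orbit space. Without some global monotone function along orbits, or another constraint forbidding re-entry, both of your ``tractable'' cases already face the obstruction you defer to the hard case; your final diagnosis of why the conjecture remains open is therefore correct, but it applies more broadly than your middle paragraphs suggest.
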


An affirmative answer to Conjecture~\ref{slsc_conj} implies an affirmative answer to the question in Example~\ref{pi_1_observations} by~\cite[Thm.~1.1]{cgm}.

\begin{example}
For general maps, path lifting, as in Theorem~\ref{pathlifting} above, is a relatively rare but special property.
For non-gradient vector fields, the associated quotient maps need not have the path lifting property.
Consider the closed annulus $A\subset\R^2$ with orbits as in Figure~\ref{spiral}.
\begin{figure}[h!]
    \centerline{\includegraphics{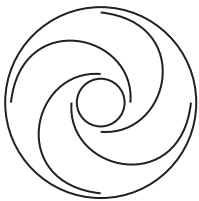}}
    \caption{Integral curves for a spiral vector field on a closed annulus $A\subset\R^2$.}
    \label{spiral}
\end{figure}
Let $q:A\to\A$ be the associated quotient map.
Let $X$ and $Y$ denote the inner and outer boundary components of $A$ respectively.
Define $x:=q(X)$ and $y:=q(Y)$.
Fix some point $z\in(\A)-\{x,y\}$.
The continuous path $\alpha:[0,1]\to\A$ defined by: $\alpha(0)=x$, $\alpha(1)=y$, and $\alpha(t)=z$ for $0<t<1$, has no continuous lift to $A$.
Similar examples on closed manifolds may be obtained either by doubling $A$ or, more generally, by embedding $A$ in a closed surface and then extending the smooth vector field on $A$ arbitrarily.
Still, the reader may verify that $\alpha$ is homotopic (rel endpoints) to a path which does admit a continuous lift to $A$.
Indeed, the map $q:A\to\A$ has path lifting \emph{up to homotopy} (rel endpoints).
Does every quotient map $q:M\to\M$ induced by a smooth, not necessarily gradient, vector field have path lifting up to homotopy (rel endpoints)? 
\end{example}

\begin{question}
The present paper focused on the topology of orbit spaces of \hbox{$f$-gradients}.
As a next step to understanding the topology of orbit spaces of arbitrary smooth vector fields,
we ask: what are the analogues of our results for circle-valued Morse functions, in particular for cellular gradients~\cite[Ch.~12]{pajitnov}?
\end{question}

\section*{Acknowledgement}
The authors thank Jason D. Mireles James for helpful conversations.

\end{document}